\newtheorem{thm}{Theorem}
\newtheorem{defn}{Definition}
\newtheorem{rem}{Remark}
\newtheorem{exple}{Example}
\title{Rectangular metric like type spaces and fixed points}
\author {N. Mlaiki$^{1}$, K. Abodayeh$^{2},$ T. Abdeljawad$^{3},$ M. Abuloha $^{4}$
   \\ {\it $^{1, 2, 3}$Department of Mathematical Sciences, Prince Sultan University}
   \\ {\it Riyadh, Saudi Arabia}
   \\ E-mail:  $^{1}$ nmlaiki@psu.edu.sa\\
               $^{2}$ kamal@psu.edu.sa\\
               $^{3}$ tabdeljawad@psu.edu.sa\\
               $^{4}$ \it Palestine Technical University Kadooreei Tulkarim Palestine\\
                email: muhib.abuloha@ptuk.edu.ps}
\date{}
\begin{document}
\maketitle{}
\begin{abstract}
In this paper we introduce the concept of the rectangular metric like spaces, along with its topology and
we prove some fixed point theorems under different contraction principles. We introduce the concept of modified metric-like space as well and prove some topological and convergence properties under the symmetric convergence. Some examples are given to illustrate the proven results and enrich the new introduced metric type spaces.

\end{abstract}
\footnotetext {2010 Mathematics Subject Classification: 54E15, 54E50, 47H10}

\footnotetext{Keywords: Rectangular metric like, modified metric type space, fixed point, contraction, symmetric convergence.}

\section{Introduction}
The generalization of Banach contraction principle, which  has many applications in different branches of science and engineering, depends either on generalizing the metric type space or the contractive type mapping ( see \cite{Kirk Shahzad} and the references therein). The generalization of the metric type space based on reducing or modifying the metric axioms. We name for example quasi-metric spaces, partial metric spaces, $m-$metric spaces, $m_b-$metric spaces, $S_p$ metric spaces,  rectangular metric spaces, $b-$metric spaces, metric-like spaces and rectangular partial metric spaces and so on (\cite{Amini 2012}-\cite{ Asadi}). In fact, losing or weakening some of the metric axioms causes the loss of some metric type convergence properties and hence brings obstacles in proving some fixed point theorems. These obstacles force researchers to develop the techniques in proving their fixed point results which leads  to development in fixed point theory. Consequently, the new obtained  fixed point results will be valid for more applications of modelling problems in different areas  where fixed point techniques are necessary.
In this article, we  restrict ourselves on developing metric-like spaces by introducing modified metric-liked spaces, rectangular metric-like spaces and  rectangular modified metric-like spaces and we shall prove some fixed point theorems in  rectangular metric-like spaces. Examples will be given to support our results and the symmetric convergence will be studied in the newly introduced metric type spaces.

\section{Partial metric and rectangular metric preliminaries}
\begin{defn}\cite{Mathews} (partial metric space)
Let $X$ be a nonempty set. A mapping $p:x\times X\rightarrow \mathbb{R}^+$ is said to be a partial metric on $X$ if for any $x,y,z \in X$, it satisfies the following conditions:
\begin{enumerate}
  \item ($p_1$) $x=y$ if and only if $p(x,y)=p(x,x)=p(y,y)$;
  \item ($p_2$) $p(x,x)\leq p(x,y)$;
  \item ($p_3$)  $p(x,y)=p(y,x)$;
  \item ($p_4$) $p(x,y)\leq p(x,z) + p(z,y)-p(z,z)$.
\end{enumerate}
In this case the pair $(X,p)$ is called a partial metric (PM) space.
\end{defn}

\begin{defn}\cite{Branc}\label{def3} (rectangular metric space (Branciari metric space))
Let $X$ be a nonempty set. A mapping $d:x\times X\rightarrow \mathbb{R}^+$ is said to be a rectangular metric on $X$ if for any $x,y \in X$ and all distinct points $u,v \in X\setminus \{x,y\}$, it satisfies the following conditions:
\begin{enumerate}
  \item ($R_1$) $x=y$ if and only if $d(x,y)=0$;
  \item ($R_2$) $d(x,y)=d(y,x)$;
  \item ($R_3$) $d(x,y)\leq d(x,u) + d(u,v)+d(v,y)$.
\end{enumerate}
In this case the pair $(X,d)$ is called a rectangular metric (RM) space.
\end{defn}

In \cite{Satish} the notion of rectangular metric space was extended to rectangular partial metric spaces as follows.

\begin{defn}\cite{Satish} (rectangular partial metric space )
Let $X$ be a nonempty set. A mapping $\rho:x\times X\rightarrow \mathbb{R}^+$ is said to be a rectangular metric on $X$ if for any $x,y \in X$ and all distinct points $u,v \in X\setminus \{x,y\}$, it satisfies the following conditions:
\begin{enumerate}
  \item ($RP_1$) $x=y$ if and only if $\rho(x,y)=\rho(x,x)=\rho(y,y)$;
  \item  ($RP_2$) $\rho(x,x)\leq \rho(x,y)$;
  \item ($RP_3$) $\rho(x,y)=\rho(y,x)$;
  \item ($RP_4$) $\rho(x,y)\leq \rho(x,u) + \rho(u,v)+\rho(v,y)-\rho(u,u)-\rho(v,v)$.
\end{enumerate}
In this case the pair $(X,\rho)$ is called a rectangular partial  metric (RPM)  space.
\end{defn}

It is clear that every rectangular metric space is a rectangular partial  metric space but the converse is not true.

\begin{exple}\cite{Satish}
   Let $𝑋 = [0, a]$ and $\alpha\geq a\geq 3$. Define the mapping $\rho:X\times X\rightarrow \mathbb{R}^+$ by
$$
\rho (x,y)=
\left\{
\begin{array}{ll}
  x & \mbox{if $x=y$} \\
 \frac{3\alpha +x+y}{2} & \mbox{if $x,y\in\{1,2\} ,x\neq y$} \\
  \frac{\alpha +x+y}{2} & \mbox{otherwise}
\end{array}
\right. .
$$
Then, $(X�, \rho�)$ is a rectangular partial metric space,but it is not a rectangular metric space,
because for any $x>0$, we have $\rho (x,x)=x\neq 0.$
\end{exple}

For convergence , completeness and examples  of RM, PM and  RPM spaces we refer to \cite{Branc,Mathews,Satish}. In general, for metric type spaces where the self-distance need not be zero we use the convergence
$$x_n\rightarrow x \Leftrightarrow \varrho(x_n,x)=\varrho(x,x),$$ where $\varrho$ is metric type function under which the self distance may not be zero.

\section{Metric-like spaces, modified metric-like spaces and symmetric convergence}

\begin{defn}\cite{Amini 2012}\label{def4}
Let $X$ be a nonempty set. A mapping $\sigma:x\times X\rightarrow \mathbb{R}^+$ is said to be a metric-like on $X$ if for any $x,y,z \in X$, it satisfies the following conditions:
\begin{itemize}

   \item ($\sigma_1$) $\sigma(x,y)=0$ implies $x=y$;
  \item ($\sigma_2$) $\sigma(x,y)=\sigma(y,x)$;
  \item  ($\sigma_3$) $\sigma(x,y)\leq \sigma(x,z) + \sigma(z,y)$.
\end{itemize}
In this case the pair $(X,\sigma)$ is called a metric- like space ($ML$-space).
\end{defn}

Every metric-like space is a topological space whose topology is generated by the base consisting of the open $\sigma-$balls
$$B_\sigma(x,\delta)=\{y \in X: |\sigma(x,y)-\sigma(x,x)|<\delta\}, ~~x \in X,~ \delta>0.$$
Note the difference between  the balls $B_\sigma(x,\delta)$ and the balls $B_p(x,\delta)$, which is due to the absence of the smallness of the self distance condition $(p_2)$ from the metric-like. Also, since self distance need not be zero in metric-like spaces then convergence and completeness in metric-like spaces still resembles that in partial metric spaces. Indeed, a sequence $\{x_n\}$ in a metric-like space converges to a point $x \in X$ if and only if $\lim_{n\rightarrow \infty}\sigma(x_n,x)=\sigma(x,x)$ and the sequence $\{x_n\}$ is called $\sigma-$Cauchy if $\lim_{m,n\rightarrow \infty}\sigma(x_n,x_m)$ exists and is finite. The metric-like space $(X,\sigma)$ is called complete if for each $\sigma-$Cauchy sequence $\{x_n\}$ there exists $x \in X$ such that
$$\lim_{n\rightarrow \infty }\sigma(x_n,x)=\sigma(x,x)=\lim_{m,n\rightarrow \infty}\sigma(x_n,x_m).$$
\begin{rem}\label{remark1}

Metric-like spaces lose some topological and convergence properties that metric space can have. For example
\begin{itemize}
  \item Limits are not unique in $ML-$ spaces. Take $X=\{a,b\}$ and let $\sigma(x,y)=1$ for any $x \in X$. Then, clearly the sequence $x_n=1$ for all $n$ converges to both $a$ and $b$. Notice that $\sigma(a,a)=\sigma(b,b)=1\neq 0$. However, if $x_n\rightarrow x$ and $x_n\rightarrow y$ such that $x,y \in \Lambda=\{z \in X:\sigma(z,z)=0\}$ then $\sigma(x,y)\leq \sigma(x_n,x)+\sigma(x_n,y)$ and hence by letting $n\rightarrow \infty $ we conclude that $\sigma(x,y)=0$ and hence $x=y$.
  \item $y \in B_\sigma(x,\delta)$ does not necessarily imply that $y \in B_\sigma(x,\delta)$.
  \item Convergent sequences are not necessarily $\sigma-$Cauchy.
  \item If $x_n$ is a $\sigma-$Cauchy sequence in $X$ and has a convergent subsequence $x_{n_i}$ to $x$, then not necessarily $x_n\rightarrow x$.
  \item If $\{x_n\}$ and  $\{x_n\}$ are $\sigma$Cauchy sequences in $X$ then it is not necessary that $\lim_{n\rightarrow\infty} \sigma(x_n,y_n)$ exists.
  \item If $x_n\rightarrow x$ and $y_n\rightarrow y$ in $(X,\sigma)$ then it not necessarily that $\lim_{n\rightarrow\infty}\sigma(x_n,y_n)=\sigma(x,y)$.
\end{itemize}
\end{rem}
Upon Remark \ref{remark1} above we define the following modified metric-like space ($mML$ space).
\begin{defn} (modified metric-like spaces)
Let $X$ be a nonempty set. A mapping $\sigma_m:x\times X\rightarrow \mathbb{R}^+$ is said to be a modified metric-like on $X$ if for any $x,y,z \in X$, it satisfies the following conditions:
\begin{itemize}

   \item ($m\sigma_1$) $\sigma_m(x,y)=0$ implies $x=y$;
  \item ($m\sigma_2$) $\sigma_m(x,y)=\sigma_m(y,x)$;
  \item  ($m\sigma_3$) $\sigma_m(x,y)\leq \sigma_m(x,z) + \sigma_m(z,y)-\sigma_m(z,z)$.
\end{itemize}
In this case the pair $(X,\sigma_m)$ is called a modified  metric- like space ($mML$-space).
\end{defn}
It is clear that every partial metric space is $mML-$space and every $mML-$space is $ML-$space.

\begin{exple} (An $ML-$space which is not $mML-$spaces)
  let $X=\{1,2,3,4,5\}$ and define the mapping $d:X\times X\rightarrow \mathbb{R}^+$ such that:
  $d(x,y)=2$ for all $x\neq y$, $d(x,x)=0$ for all $x\neq 1$ and $d(1,1)=3$. Then it is clear that the conditions $\sigma_1 ,\sigma_2$ of Definition \ref{def4} are satisfied. We need to verify the last condition. If $x\neq y$, then we have
  $d(x,u)+d(u,y)=2+2\geq d(x,y)$. Also, if $x=y=1$, then $d(1,u)+d(u,1)=2+2\geq d(1,1)$. Finally, if $x=y\neq 1$,
  $d(x,u)+d(u,x)\geq 0=d(x,x)$. Therefore, $(X,d)$ is a $ML$-space but it is not a $mML-$space because
  $d(2,1)+d(1,3)-d(1,1)=2+2-3=1\leq d(2,3)$.
\end{exple}

\begin{defn} (symmetric convergence in  metric-like spaces)
We shall say that a sequence $\{x_n\}$ of a  metric-like space $(X,\sigma)$ is symmetric convergent to $x \in X$ if for every $\epsilon>0$
there exists $n_0 \in \mathbb{N}$ such that for each $n\geq n_0$ we have  $$x_n \in B_\sigma(x,\epsilon) ~\texttt{and}~x \in B_\sigma(x_n,\epsilon). $$
Equivalently, if

\begin{equation}\label{eqi}
\lim_{n\rightarrow \infty}\sigma(x_n,x)=\lim_{n\rightarrow \infty}\sigma(x_n,x_n)=\sigma(x,x).
\end{equation}

\end{defn}
We shall denote $x_n\rightarrow^s x$ for symmetric convergence which is characterized by (\ref{eqi}). It is clear that symmetric convergence implies $\sigma-$convergence or the $\sigma-$ topology  convergence.
\begin{thm}Let $(X,\sigma_m)$ be a $mML-$space. Then
\begin{enumerate}
  \item If $x_n\rightarrow^s x$ then $\{x_n\}$ is $\sigma_m-$Cauchy.
  \item If $\{x_n\}$ is $\sigma_m-$Cauchy and  has a subsequence $\{x_{n_i}\}$ such that $x_{n_i}\rightarrow^s x$ then $x_{n}\rightarrow^s x$.
  \item If $\{x_n\}$ and $\{y_n\}$ are $\sigma_m-$Cauchy sequences then $\lim_{n\rightarrow\infty} \sigma_m(x_n,y_n)$ exists.
  \item If $x_{n}\rightarrow^s x$ and $y_{n}\rightarrow^s y$ then $\lim_{n\rightarrow\infty} \sigma_m(x_n.y_n)=\sigma(x,y)$.
\end{enumerate}

\end{thm}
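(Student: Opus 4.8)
The unifying device throughout is the modified triangle inequality $(m\sigma_3)$, which I intend to apply in \emph{both} directions: inserting an auxiliary point to produce upper bounds, and treating the limit point as an endpoint while a sequence term plays the role of the inserted point to produce matching lower bounds. The subtracted self-distance term $-\sigma_m(z,z)$ is exactly what forces these two bounds to coincide in the limit, so in each part the strategy is a $\limsup/\liminf$ squeeze. Before the four parts I would record the one auxiliary fact that drives everything: if $\{x_n\}$ is $\sigma_m$-Cauchy then $\lim_{m,n\to\infty}\sigma_m(x_n,x_m)=L$ is finite, and specializing the double limit to the diagonal $m=n$ gives $\lim_n \sigma_m(x_n,x_n)=L$ as well; consequently $\sigma_m(x_n,x_m)-\sigma_m(x_m,x_m)\to 0$.

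For part (1), starting from $x_n\rightarrow^s x$ I would apply $(m\sigma_3)$ to $\sigma_m(x_n,x_m)\le \sigma_m(x_n,x)+\sigma_m(x,x_m)-\sigma_m(x,x)$ and let $m,n\to\infty$, using $\sigma_m(x_n,x)\to\sigma_m(x,x)$, to obtain $\limsup \sigma_m(x_n,x_m)\le \sigma_m(x,x)$. Running $(m\sigma_3)$ the other way, $\sigma_m(x_n,x)\le \sigma_m(x_n,x_m)+\sigma_m(x_m,x)-\sigma_m(x_m,x_m)$, and rearranging yields $\liminf \sigma_m(x_n,x_m)\ge \sigma_m(x,x)$; hence the double limit exists and is finite, i.e.\ $\{x_n\}$ is $\sigma_m$-Cauchy. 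Part (2) builds on this: the hypothesis $x_{n_i}\rightarrow^s x$ together with part (1) gives $\lim_{i,j}\sigma_m(x_{n_i},x_{n_j})=\sigma_m(x,x)$, and since this is a sub-limit of the Cauchy double limit of $\{x_n\}$ I can identify $L=\sigma_m(x,x)$. I would then squeeze $\sigma_m(x_n,x)$: the bound $\sigma_m(x_n,x)\le \sigma_m(x_n,x_{n_i})+\sigma_m(x_{n_i},x)-\sigma_m(x_{n_i},x_{n_i})$ forces $\limsup \le \sigma_m(x,x)$, while $\sigma_m(x,x)\le 2\sigma_m(x,x_n)-\sigma_m(x_n,x_n)$ (the instance of $(m\sigma_3)$ with both endpoints equal to $x$ and inserted point $x_n$) forces $\liminf \ge \sigma_m(x,x)$; combined with $\sigma_m(x_n,x_n)\to L=\sigma_m(x,x)$ from the diagonal, this gives $x_n\rightarrow^s x$.

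For part (3) I would show the real sequence $a_n=\sigma_m(x_n,y_n)$ is Cauchy in $\mathbb{R}$. Chaining $(m\sigma_3)$ twice through the intermediate points $x_m$ and $y_m$ gives $\sigma_m(x_n,y_n)-\sigma_m(x_m,y_m)\le [\sigma_m(x_n,x_m)-\sigma_m(x_m,x_m)]+[\sigma_m(y_n,y_m)-\sigma_m(y_m,y_m)]$, and by the auxiliary fact both bracketed terms tend to $0$; the symmetric estimate (swapping $m,n$) bounds the difference from the other side, so $|a_n-a_m|\to 0$ and $a_n$ converges. Part (4) is the cleanest squeeze: inserting $x$ then $y$ gives the upper bound $\sigma_m(x_n,y_n)\le \sigma_m(x_n,x)+\sigma_m(x,y)+\sigma_m(y,y_n)-\sigma_m(x,x)-\sigma_m(y,y)$, whose limit is $\sigma_m(x,y)$, while inserting $x_n$ then $y_n$ into $\sigma_m(x,y)$ and rearranging gives a lower bound with the same limit; the symmetric-convergence relations $\sigma_m(x_n,x)\to\sigma_m(x,x)$, $\sigma_m(y_n,y)\to\sigma_m(y,y)$ together with the just-proved convergence of the self-distances close the argument (the statement's $\sigma(x,y)$ is read as $\sigma_m(x,y)$).

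The step I expect to require the most care is the auxiliary fact underpinning parts (2) and (3): that the diagonal self-distances of a $\sigma_m$-Cauchy sequence converge to the same limit $L$ as the off-diagonal double limit. This is immediate only if $\lim_{m,n\to\infty}$ is taken in the genuine two-index (uniform) sense, so that restricting to $m=n$ is legitimate; I would make this interpretation explicit, since without it the identification $\sigma_m(x_n,x_n)\to L$ — and hence the vanishing of $\sigma_m(x_n,x_m)-\sigma_m(x_m,x_m)$ that both parts lean on — would not follow. Everything else is a routine $\limsup/\liminf$ squeeze once $(m\sigma_3)$ is deployed in the two complementary directions.
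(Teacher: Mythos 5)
Your proposal is correct and takes essentially the same route as the paper's proof: the identical two-sided deployment of $(m\sigma_3)$ (inserting a point for the upper bound, reversing roles for the lower bound) to squeeze each quantity, part by part. The only notable difference is that you isolate and justify the diagonal fact --- that for a $\sigma_m$-Cauchy sequence $\sigma_m(x_n,x_n)\to L$, valid because the double limit is read in the genuine two-index sense --- which the paper uses tacitly in parts (2) and (3); this is a point of added rigor, not a different argument.
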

\begin{proof}

\end{proof}
\begin{enumerate}
  \item Assume $x_n\rightarrow^s x$.  Then $\lim_{n\rightarrow\infty} \sigma_m(x_n.y_n)=\sigma(x,y)$. By $m\sigma_3$, for each $l,n \in \mathbb{N}$  we have $$\sigma_m(x_n,x_l)\leq \sigma_m(x_n,x)+\sigma_m(x,x_l)-\sigma_m(x,x) $$
      and $$ \sigma_m(x,x)\leq \sigma_m(x,x_n)+\sigma_m(x_n,x_l)+ \sigma_m(x_l,x)-\sigma_m(x_n,x_n)-\sigma_m(x_l,x_l).$$
      Let $l,n\rightarrow\infty$. Then $\sigma_m(x,x)\leq \lim_{n,l\rightarrow\infty}\sigma_m(x_l,x_n)\leq \sigma_m(x,x)$. Hence $\lim_{n,l\rightarrow\infty}\sigma_m(x_l,x_n)= \sigma_m(x,x)$ and so $\{x_n\}$ is $\sigma_m-$Cauchy.
  \item Let $\{x_n\}$ is $\sigma_m-$Cauchy and  has a subsequence $\{x_{n_i}\}$ such that $x_{n_i}\rightarrow^s x$. Then $$\lim_{i\rightarrow\infty }\sigma_m(x_{n_i},x)=\lim_{i\rightarrow\infty }\sigma_m(x_{n_i},x_{n_i})=\sigma_m(x,x).$$ Since $\{x_n\}$ is $\sigma_m-$Cauchy then there exists $r>0$ such that $\lim_{l,n\rightarrow \infty} \sigma_m(x_n,x_l)=r$. It is clear that $\sigma_m(x,x)=r$ as well. On the other hand by ($m\sigma_3$) we have $$\sigma_m(x_n,x)\leq \sigma_m(x_n,x_{n_i})+\sigma_m(x_{n_i},x)-\sigma_m(x_{n_i},x_{n_i}),$$ and
      $$\sigma_m(x_{n_i},x)\leq \sigma_m(x_{n_i},x_n)+\sigma_m(x_n,x)-\sigma_m(x_{n},x_n).$$
      Therefore, $$\sigma_m(x_{n_i}, x)-\sigma_m(x_{n_i}, x_n)+\sigma_m(x_{n}, x_n)\leq \sigma_m(x_{n}, x)\leq \sigma_m(x_n,x_{n_i})+\sigma_m(x_{n_i}, x)-\sigma_m(x_{n_i},x_{n_i}).$$
      If we let $n,i\rightarrow \infty$ then $\sigma_m(x,x)-r+r\leq \lim_{n,l\rightarrow\infty} \sigma_m(x_n,x_l)\leq r+\sigma_m(x,x)-\sigma_m(x,x)$. From which it follows that $\lim_{n\rightarrow\infty}\sigma_m(x_n,x)=\lim_{n\rightarrow\infty}\sigma_m(x_n,x_n)=\sigma_m(x,x)$ and hence $x_n\rightarrow ^s x$.
  \item Assume  $\{x_n\}$ and $\{y_n\}$ are $\sigma_m-$Cauchy sequences in $X$. Then, there exist $r_1,r_2>0$ such that
   $\lim_{n,l\rightarrow\infty}\sigma_m(x_n,x_l)=r$ and $\lim_{n,l\rightarrow\infty}\sigma_m(y_n,y_l)=r_2$ . It is sufficient to prove that the sequence $\{\sigma_m(x_n,y_n)\}$ is Cauchy in $\mathbb{R}$. By ($m\sigma_3$) for each $n,l \in \mathbb{N}$ we have
   $$\sigma_m(x_n,y_n)\leq\sigma_m(x_n,x_l)+\sigma_m(x_l,y_l)+\sigma_m(y_l,x_n)-\sigma_m(x_l,x_l)-\sigma_m(y_l,y_l),$$
   and
   $$\sigma_m(x_l,y_l)\leq \sigma_m(x_l,x_n)+\sigma_m(x_n,y_n)+\sigma_m(y_n,y_l)-\sigma_m(x_n,x_n)-\sigma_m(y_n,y_n).$$
   From which it follows that
   $$\sigma_m(x_n,x_n)+\sigma_m(y_n,y_n)-\sigma_m(x_l,x_n)-\sigma_m(y_n,y_l)\leq \sigma_m(x_n,y_n)-\sigma_m(x_l,y_l)\leq $$
   $$\sigma_m(x_n,x_l)+\sigma_m(y_l,y_n)-\sigma_m(x_l,x_l)-\sigma_m(y_l,y_l).$$ Let $n,l\rightarrow\infty$ then
    $$r_1+r_2-r_1-r_2\leq \lim_{n,l\rightarrow\infty}( \sigma_m(x_n,y_n)-\sigma_m(x_l,y_l)\leq r_1+r_2-r_1-r_2.$$
    Hence $|\sigma_m(x_n,y_n)-\sigma_m(x_l,y_l)|=0$ and so $\{\sigma_m(x_n,y_n)\}$ is Cauchy in $\mathbb{R}$.
  \item assume $x_{n}\rightarrow^s x$ and $y_{n}\rightarrow^s y$. Then $$\lim_{n\rightarrow\infty}\sigma_m(x_n,x)=\lim_{n\rightarrow\infty}\sigma_m(x_n,x_n)=\sigma_m(x,x),$$
      and
     $$\lim_{n\rightarrow\infty}\sigma_m(y_n,y)=\lim_{n\rightarrow\infty}\sigma_m(y_n,y_n)=\sigma_m(y,y).$$
     Now for each $n \in \mathbb{N}$ we have  $$\sigma_m(x_n,y_n)\leq \sigma_m(x_n,x)+\sigma_m(x,y)+\sigma_m(y,y_n)-\sigma_m(x,x)-\sigma_m(y,y), $$
     and
     $$\sigma_m(x,y)\leq \sigma_m(x,x_n)+\sigma_m(x_n,y_n)+\sigma_m(y_n,y)-\sigma_m(x_n,x_n)-\sigma_m(y_n,y_n). $$
     Finally, letting $n\rightarrow\infty$ will lead to
     $$\lim_{n\rightarrow\infty}\sigma_m(x_n,y_n)\leq \sigma_m(x,y)\leq\lim_{n\rightarrow\infty}\sigma_m(x_n,y_n), $$ and thus
     $\lim_{n\rightarrow\infty}\sigma(x_n,y_n)=\sigma(x,y)$.
\end{enumerate}

\section{Rectangular metric-like and rectangular modified metric-like topological spaces}
In this section we introduce new concept of rectangular metric-like and rectangular modified metric-like spaces.

\begin{defn}\label{def7}
Let $X$ be a nonempty set and $\rho_{r}:X^{2}\rightarrow [0,\infty)$ be a function. If
the following conditions are satisfied for all $x,y$ in $X$
\begin{enumerate}
\item $\rho_{r}(x,y)=0 \Rightarrow x=y$
\item $\rho_{r}(x,y)= \rho_{r}(y,x)$.
\item $\rho_{r}(x,y)\le \rho_{r}(x,u)+\rho_{r}(u,v)+\rho_{r}(v,y), \ \text{for all} \ u,v\in X\setminus\{x,y\}$
\end{enumerate}
then the pair $(X,\rho_{r})$ is called a rectangular metric-like (RML) space.
\end{defn}

\begin{defn}
Let $X$ be a nonempty set and $\rho_{mr}:X^{2}\rightarrow [0,\infty)$ be a function. If
the following conditions are satisfied for all $x,y$ in $X$
\begin{enumerate}
\item $\rho_{rm}(x,y)=0 \Rightarrow x=y$
\item $\rho_{rm}(x,y)= \rho_{rm}(y,x)$.
\item $\rho_{rm}(x,y)\le \rho_{rm}(x,u)+\rho_{rm}(u,v)+\rho_{rm}(v,y)-\rho_{rm}(u,u)-\rho_{rm}(v,v), \ \text{for all} \ u,v\in X\setminus\{x,y\}$
\end{enumerate}
then the pair $(X,\rho_{rm})$ is called a rectangular modified metric-like (RMML) space.
\end{defn}

\begin{exple}
  Let $X=\{1,2,3,4,5\}$ and define the mapping $\rho_r : X^2 \rightarrow [0,\infty)$ by
  $$
  \rho_r (x,y)=
 \left\{
\begin{array}{ll}
  2.5 & \mbox{for $x\neq y$} \\
 5 & \mbox{if $x=y=1$} \\
  0 & \mbox{otherwise}
\end{array}
\right. .
$$
Then, it is clear that conditions 1 and 2  of Definition \ref{def7} are satisfied.
We need to verify the last condition of the definition. For all $u,v\in X\setminus\{x,y\}$, we have
$\rho_r (x,u)+\rho_r (u,v)+\rho_r (v,y)=2.5+\rho_r (u,v)+ 2.5=5+\rho_r (u,v)\geq \rho_r (x,y)$,
for all $x,y\in X$. Therefore, $(X,\rho_r )$ is a RML-space but it is not a RMML-space because
$\rho_r (2,1)+\rho_r (1,1)+\rho_r (1,3)-\rho_r (1,1)-\rho_r (1,1)=2.5+5+2.5-5-5=0\leq \rho_r (2,3)=2.5$.
Moreover, the space $(X,\rho_r )$ is not a rectangular partial metric because of the condition $RP_4$ of Definition \ref{def3}.
\end{exple}

\begin{exple}
  Let $(X,\rho_r )$ be a RMML-space. Then the space $(X,\rho_{rm} )$ is also a RMML-space, where $\rho_{rm} (x,y)=\rho_r (x,y)+\alpha $ and $\alpha >0$.
  To prove this argument, we need to prove the triangle inequality of the definition.
For any $x,y\in X$ and $u,v\in X\setminus \{x,y\}$, we have
$
\rho_{rm} (x,u)+\rho_{rm}(u,v)+\rho_{rm}(v,y)-\rho_{rm}(u,u)-\rho_{rm}(v,v)=\rho_{r}(x,u)+\rho_{r}(u,v)+\rho_{r}(v,y)-\rho_{r}(u,u)-\rho_{r}(v,v)+\alpha \geq \rho_{r}(x,y)+\alpha  =\rho_{rm}(x,y).
$
\end{exple}

\begin{exple}
  Let $X=(0,1)$ and define the mapping $\rho_{mr}:X^{2}\rightarrow [0,\infty)$ by $\rho_{mr} (x,y)=|x-y|+2$. Then $(X,\rho_{rm} )$ is a RMML-space.
We need to verify the triangle inequality. For any $x,y\in X$ and $u,v\in X\setminus \{x,y\}$, we have
$
\rho_{rm} (x,u)+\rho_{rm}(u,v)+\rho_{rm}(v,y)-\rho_{rm}(u,u)-\rho_{rm}(v,v)=|x-u|+|u-v|+|v-y|+2 \geq  |x-y|+2  =\rho_{rm}(x,y).
$

\end{exple}

\begin{defn}\label{d9}
\begin{enumerate}
\item A sequence $\{x_{n}\}$ is called $\rho_{r}-$convergent ($\rho_{rm}-$convergent) in a rectangular metric like space $(X,\rho_{r}),$ (a rectangular modified metric like space $(X,\rho_{rm}),$) if there exists $x\in X$
such that $\lim_{n\rightarrow \infty}\rho_{r}(x_{n},x)=\rho_{r}(x,x).$ ($\lim_{n\rightarrow \infty}\rho_{rm}(x_{n},x)=\rho_{r}(x,x).$)\\
\item A sequence $\{x_{n}\}$ is called $\rho_{r}-$Cauchy if and only if $\lim_{n,m\rightarrow \infty}\rho_{rm}(x_{n},x_{m})$

($\lim_{n,m\rightarrow \infty}\rho_{rm}(x_{n},x_{m})$ )
exists and finite.\\
\item A rectangular metric like space $(X,\rho_{r})$ (rectangular modified metric like $(X,\rho_{rm})$ )is called $\rho_{r}-$complete ( $\rho_{rm}-$complete) if every $\rho_{r}-$Cauchy
($\rho_{rm}-$Cauchy) sequence is $\rho_{r}-$convergent ($\rho_{rm}-$convergent).
\end{enumerate}
\end{defn}
\begin{rem}
The convergence defined above in Definition \ref{d9} is the convergence obtained in the sense of the topology generated by the open balls
$B_\varrho(x,\delta)=\{y \in X: |\varrho(x,y)-\varrho(x,x)|<\delta\},~~x \in X,~~\varrho \in \{\rho_r,\rho_{rm}\}$. This convergence is weaker than the symmetric convergence discussed before.
\end{rem}
\begin{defn} (Continuity of maps) A mapping $f:(X,\varrho_1)\rightarrow
 (Y,\varrho_1)$ between two metric type spaces is continuous at $x \in X$ if and only if $f(x_n)\rightarrow^{\varrho_2} f(x)$ whenever $\{x_n\}$ is a sequence in $X$ such that
$x_n\rightarrow^{\varrho_1} x$. For example if $(X,\varrho_1)$ and $(Y,\varrho_2)$ are metric type spaces in which the self distance is not necessary zero (such as ML, MML, PM, RPM, RML, RMML spaces)then  $f:X\rightarrow Y$ is continuous at $x \in X$ if and only if $\varrho_2(f(x_n),f(x))\rightarrow \varrho_2(f(x),f(x))$ whenever $\varrho_1(x_n,x)\rightarrow \varrho_1(x,x)$. For other types of continuity when convergence is varying between symmetric convergence and topology-convergence we refer to \cite{KNTW JMA 2016}.
\end{defn}
\section{Rectangular metric-like fixed point results }
\begin{thm}
Let $(X,\rho_{r})$ be a $\rho_{r}-$complete rectangular metric like space, and $T$ a self mapping on $X.$
If there exists $0<k<1$ such that
 $$\rho_{r}(Tx,Ty)\le k \rho_{r}(x,y) \ \text{ for all } \ x,y \in X, \ \ \ (1)$$
then $T$ has a unique fixed point $u$ in $X,$ where $\rho_{r}(u,u)=0.$
\end{thm}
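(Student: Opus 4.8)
The plan is to run the classical Picard iteration and adapt it to the rectangular structure, where the only real care is the distinctness hypothesis built into condition (3) of Definition \ref{def7}. Fix any $x_0\in X$ and set $x_{n+1}=Tx_n$. Applying the contraction (1) repeatedly yields the two basic decay estimates $\rho_r(x_n,x_{n+1})\le k^n\rho_r(x_0,x_1)$ and $\rho_r(x_n,x_{n+2})=\rho_r(Tx_{n-1},Tx_{n+1})\le k^n\rho_r(x_0,x_2)$, both tending to $0$ geometrically and both using only (1), not the rectangular inequality. Before anything else I would dispose of the degenerate case: if $x_N=x_{N+1}$ for some $N$, then $Tx_N=x_N$ and $x_N$ is already a fixed point. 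So I may assume $x_n\neq x_{n+1}$ for every $n$.

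The first genuine obstacle is that (3) may only be invoked with two intermediate points lying in $X\setminus\{x,y\}$ (and, as in Definition \ref{def3}, distinct), which is not automatic here. To secure it I would first show that, under the standing assumption $x_n\neq x_{n+1}$ for all $n$, the iterates are pairwise distinct. Indeed, if $x_n=x_{n+p}$ with $p\ge 2$ minimal, then $T$ makes the sequence $p$-periodic from index $n$, so $\rho_r(x_n,x_{n+1})=\rho_r(x_{n+jp},x_{n+jp+1})\le k^{n+jp}\rho_r(x_0,x_1)\to 0$ as $j\to\infty$, forcing $\rho_r(x_n,x_{n+1})=0$ and hence $x_n=x_{n+1}$, a contradiction. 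With all iterates distinct I may legitimately feed $x_{n+1},x_{n+2}$ into (3).

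Next I would show $\{x_n\}$ is $\rho_r$-Cauchy with $\lim_{n,m}\rho_r(x_n,x_m)=0$. Here the quadrilateral (rather than triangle) inequality forces a parity split: for $m-n$ odd I repeatedly apply (3) with intermediate points $x_{n+1},x_{n+2}$, reducing the gap by two each time until it reaches $1$; for $m-n$ even I reduce likewise until the gap is $2$. In either case $\rho_r(x_n,x_m)$ is bounded by a tail $\sum_{i\ge n}k^i\rho_r(x_0,x_1)$ plus at most one term $k^{n'}\rho_r(x_0,x_2)$ with $n'\ge n$, both dominated by $\frac{k^n}{1-k}\big(\rho_r(x_0,x_1)+\rho_r(x_0,x_2)\big)\to 0$. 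By $\rho_r$-completeness there is $u\in X$ with $\lim_n\rho_r(x_n,u)=\rho_r(u,u)=\lim_{n,m}\rho_r(x_n,x_m)$ (paralleling the metric-like completeness recalled earlier); since the Cauchy value is $0$, both $\rho_r(u,u)=0$ and $\rho_r(x_n,u)\to 0$.

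Finally I would identify $u$ as the fixed point. Assuming $Tu\neq u$ (otherwise we are done), all iterates are distinct, so for large $n$ the points $x_n,x_{n+1}$ avoid $\{u,Tu\}$ and (3) gives $\rho_r(u,Tu)\le\rho_r(u,x_n)+\rho_r(x_n,x_{n+1})+\rho_r(x_{n+1},Tu)$, where the last term is $\le k\rho_r(x_n,u)$ by (1). Letting $n\to\infty$, all three terms vanish, so $\rho_r(u,Tu)=0$ and condition (1) of Definition \ref{def7} forces $Tu=u$. Uniqueness is immediate from the contraction: two fixed points $u,w$ satisfy $\rho_r(u,w)=\rho_r(Tu,Tw)\le k\rho_r(u,w)$, whence $\rho_r(u,w)=0$ and $u=w$; moreover $\rho_r(u,u)=\rho_r(Tu,Tu)\le k\rho_r(u,u)$ re-confirms $\rho_r(u,u)=0$, and this last argument also pins the self-distance in the degenerate case. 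I expect the distinctness lemma together with the parity bookkeeping in the Cauchy step to be the main source of friction; once those are in place, the remainder is the standard Banach argument.
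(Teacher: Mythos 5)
Your proposal follows essentially the same route as the paper's proof: Picard iteration, the geometric estimates $\rho_r(x_n,x_{n+1})\le k^n\rho_r(x_0,x_1)$ and $\rho_r(x_n,x_{n+2})\le k^n\rho_r(x_0,x_2)$, a parity-split application of the quadrilateral inequality to get the Cauchy property, completeness to produce $u$ with $\rho_r(u,u)=0$, the same three-term estimate for $\rho_r(u,Tu)$, and the same contraction argument for uniqueness. Where you differ is in rigor, mostly to your advantage. Your periodicity lemma (if $x_n=x_{n+p}$ then $\rho_r(x_n,x_{n+1})\le k^{n+jp}\rho_r(x_0,x_1)\to 0$, forcing $x_n=x_{n+1}$) gives pairwise distinctness of \emph{all} iterates cleanly, whereas the paper only treats the case $x_0=x_n$ and then asserts general distinctness. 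You are also the only one to verify that the points fed into the rectangular inequality satisfy its distinctness hypothesis, in particular in the final step, where $x_n,x_{n+1}$ must avoid $\{u,Tu\}$; the paper applies that inequality without comment. Finally, your remark that $\rho_r(u,u)=\rho_r(Tu,Tu)\le k\rho_r(u,u)$ pins the self-distance even in the degenerate case, and your explicit appeal to the strong (metric-like-style) form of completeness is something the paper uses only implicitly when it writes $\lim_n\rho_r(x_n,x_n)=\rho_r(u,u)$.

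There is one small gap. You claim $\lim_{n,m}\rho_r(x_n,x_m)=0$ and later use this Cauchy value to conclude $\rho_r(u,u)=0$, but your parity argument only covers $n\ne m$. The double limit includes the diagonal terms $\rho_r(x_n,x_n)$, and in a rectangular metric-like space the self-distances need not vanish a priori; if they did not tend to $0$, the double limit would not exist and the sequence would not even be $\rho_r$-Cauchy in the paper's sense. The paper covers this with a third estimate,
$$\rho'_n=\rho_r(x_n,x_n)=\rho_r(Tx_{n-1},Tx_{n-1})\le k\,\rho_r(x_{n-1},x_{n-1})\le\cdots\le k^n\rho_r(x_0,x_0)\to 0,$$
which is also what it uses to identify $\rho_r(u,u)=0$. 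This is the same contraction step you already use, applied to the pair $(x_{n-1},x_{n-1})$; adding that one line closes the gap and makes your argument complete.
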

\begin{proof}
Let $x_{0}\in X$ and define the sequence $\{x_{n}\}$ by
$$x_{1}=Tx_{0}, x_{2}=Tx_{1}=T^{2}x_{0},\cdots, x_{n}=Tx_{n-1}=T^{n}x_{0},\cdots$$
Note that, if there exists a natural number $n$ such that $\rho_{r}(x_{n},x_{n+1})=0,$
then $x_{n}=x_{n+1}$ which implies that $x_{n}$ is a fixed point of $T$ and we are done.
Also, if $x_{n}=x_{n+1}$ for some $n,$ then $x_{n}$ is the fixed point of $T$ and we also done.
So, we may assume that $\rho_{r}(x_{n},x_{n+1})>0,$ and $x_{n}\neq x_{n+1}$ for all $n.$

First, consider the following notations: $$\rho_{n}=\rho_{r}(x_{n},x_{n+1}), \ \rho^{*}_{n}=\rho_{r}(x_{n},x_{n+2}) \ \text{ and} \ \rho^{'}_{n}=\rho_{r}(x_{n},x_{n}).$$
Hence,
\begin{align*}
\rho^{'}_{n}=\rho_{r}(x_{n},x_{n})=\rho_{r}(Tx_{n-1},Tx_{n-1})&\le k \max\{ \rho_{r}(x_{n-1},x_{n-1}),\rho_{r}(x_{n-1},x_{n-1}),\rho_{r}(x_{n-1},x_{n-1})\}\\
&=k \rho^{'}_{n-1}\\ & \le \cdots \\ &\le k^{n} \rho^{'}_{0} .
\end{align*}
Thereby, $$\lim_{n\rightarrow \infty}\rho^{'}_{n}=0$$
Also, by using $(1)$ we obtain:
\begin{align*}
\rho_{n}=\rho_{r}(x_{n},x_{n+1})=\rho_{r}(Tx_{n-1},Tx_{n})&\le k \rho_{r}(x_{n-1},x_{n})=\rho_{n-1}\\
&\le k^{2}\rho_{n-2}\\ &\le \cdots\\ &\le k^{n}\rho_{0}.
\end{align*}
Therefore, $$\rho_{n}\le k^{n}\rho_{0}. \ \ \ (2)$$
Similarly, it not difficult to see that
 $$\rho^{*}_{n}\le k^{n}\rho^{*}_{0}. \ \ \ (3)$$
Now, if for some $n> 0$ we have $x_{0}=x_{n},$ then
\begin{align*}
\rho_{0}&=\rho_{r}(x_{0},Tx_{0})\\ &=\rho_{r}(x_{n},Tx_{n})\\ &=\rho_{n}\\ &\le k^{n}\rho_{0},
\end{align*}
which leads to a contradiction. Thus, in this case we have $\rho_{0}=0$ and that is $x_{0}=x_{1},$ therefore
$x_{0}$ is a fixed point of $T.$ Thus, we may assume now that $x_{n}\neq x_{m}$ for all natural numbers $n\neq m.$
Next, we claim that $\rho_{r}(x_{n},x_{n+p})\rightarrow 0$ as $n,p\rightarrow \infty.$
To prove the claim we need to consider the following two cases:

\textbf{Case 1:} $p=2m+1$ (i.e: $p$ is odd). Hence, by $(1)$ and $(2)$ we have:
\begin{align*}
\rho_{r}(x_{n},x_{n+2m+1})&\le \rho_{r}(x_{n},x_{n+1})+\rho_{r}(x_{n+1},x_{n+2})+\rho_{r}(x_{n+2},x_{n+2m+1})\\&\le
\cdots \\ & \le \rho_{r}(x_{n},x_{n+1})+\rho_{r}(x_{n+1},x_{n+2})+\cdots +\rho_{r}(x_{n+2m},x_{n+2m+1})\\
&\le k^{n}\rho_{0}+k^{n+1}\rho_{0}+\cdots+k^{n+2m}\rho_{0}\\
&= k^{n}\rho_{0}[1+k+k^{2}+\cdots+k^{2m}]\\
&= k^{n}\rho_{0}(\displaystyle\frac{1-k^{2m+1}}{1-k}).
\end{align*}
Taking the limit in above inequality we obtain:
$$\rho_{r}(x_{n},x_{n+2m+1})\rightarrow 0 \ \text{as} \ n,m\rightarrow \infty$$

\textbf{Case 2:} $p=2m$ (i.e: $p$ is even). Hence, by $(1),$ $(2)$ and $(3)$ we have:
\begin{align*}
\rho_{r}(x_{n},x_{n+2m})&\le \rho_{r}(x_{n},x_{n+1})+\rho_{r}(x_{n+1},x_{n+2})+\rho_{r}(x_{n+2},x_{n+2m})\\&\le
\cdots \\ & \le \rho_{r}(x_{n},x_{n+1})+\rho_{r}(x_{n+1},x_{n+2})+\cdots +\rho_{r}(x_{n+2m-3},x_{n+2m-2})+\rho_{r}(x_{n+2m-2},x_{n+2m})\\
&\le k^{n}\rho_{0}+k^{n+1}\rho_{0}+\cdots+k^{n+2m-3}\rho_{0}+k^{n+2m-2}\rho^{*}_{0}
\end{align*}
Using the fact that $0<k<1$ and taking the limit in above inequality we obtain:
$$\rho_{r}(x_{n},x_{n+2m})\rightarrow 0 \ \text{as} \ n,m\rightarrow \infty$$

Thus, $\{x_{n}\}$ is a $\rho_{r}-$Cauchy sequence. Since $(X,\rho_{r})$ is a $\rho_{r}-$complete rectangular metric like space,
we deduce that $\{x_{n}\}$ converges to some $u\in X,$ such that
$$\lim_{n\rightarrow \infty}\rho_{r}(x_{n},x_{n})=\rho_{r}(u,u)$$
On the other hand, we have $$0= \lim_{n\rightarrow \infty}\rho^{'}_{n}=\lim_{n\rightarrow \infty}\rho_{r}(x_{n},x_{n})=\rho_{r}(u,u)$$
Hence, $$\rho_{r}(u,u)=0$$
Now, \begin{align*}
 \rho_{r}(u,Tu)&\le\rho_{r}(u,x_{n})+\rho_{r}(x_{n},x_{n+1})+\rho_{r}(x_{n+1},Tu)\\ &\le
 \rho_{r}(u,x_{n})+\rho_{r}(x_{n},x_{n+1})+k\rho_{r}(x_{n},u).
 \end{align*}
Taking the limit as $n\rightarrow \infty$ we deduce that $\rho_{r}(u,Tu)=0$ and that is $Tu=u.$ Therefore, $T$ has a fixed point in $X.$
To show the uniqueness of the fixed point, assume that $T$ has two fixed points
say $u$ and $v,$ hence

$$\rho_{r}(u,v)=\rho_{r}(Tu,Tv)\le k \rho_{r}(u,v)< \rho_{r}(u,v).$$ Thus,
$$\rho_{r}(u,v)=0,$$ which implies that $u=v$ as required.
\end{proof}

\begin{thm}
Let $(X,\rho_{r})$ be a $\rho_{r}-$complete rectangular metric like space, and $T$ a self mapping on $X.$
If there exists $0<k<1$ such that
 $$\rho_{r}(Tx,Ty)\le k \max\{\rho_{r}(x,y),\rho_{r}(x,x),\rho_{r}(y,y)\} \ \text{ for all } \ x,y \in X, \ \ \ \ (4)$$
then $T$ has a unique fixed point $u$ in $X,$ Where $\rho_{r}(u,u)=0$
\end{thm}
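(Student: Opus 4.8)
The plan is to rerun the Picard iteration $x_0 \in X$, $x_n = T^n x_0$ exactly as in the previous theorem, upgrading each estimate so that the $\max$ on the right of (4) is absorbed. First I would dispose of the trivial cases: if $x_n = x_{n+1}$ for some $n$ then $w := x_n$ is a fixed point, and applying (4) with $x=y=w$ gives $\rho_r(w,w) \le k\rho_r(w,w)$, i.e. $\rho_r(w,w)=0$, so we are done. Hence I may assume $\rho_n := \rho_r(x_n,x_{n+1})>0$ for every $n$.

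Next I would bound the self-distances. Taking $x=y=x_{n-1}$ in (4) collapses the maximum to a single term and gives $\rho'_n := \rho_r(x_n,x_n) \le k\rho'_{n-1} \le k^n \rho'_0$, so $\rho'_n \to 0$; this decay is the engine for everything else. Applying (4) with $x=x_{n-1},\,y=x_n$ gives $\rho_n \le k\max\{\rho_{n-1},\rho'_{n-1},\rho'_n\}$, and with $x=x_{n-1},\,y=x_{n+1}$ gives $\rho^*_n := \rho_r(x_n,x_{n+2}) \le k\max\{\rho^*_{n-1},\rho'_{n-1},\rho'_{n+1}\}$. Setting $M=\max\{\rho_0,\rho'_0\}$ and $M^*=\max\{\rho^*_0,\rho'_0\}$, a one-line induction then yields $\rho_n \le k^n M$ and $\rho^*_n \le k^n M^*$: in the inductive step the already-proven bounds $\rho'_{n-1} \le k^{n-1}\rho'_0$ and $\rho'_{n+1} \le k^{n+1}\rho'_0$ are each $\le k^{n-1}M$ (resp. $k^{n-1}M^*$), so the maximum never exceeds $k^{n-1}M$ (resp. $k^{n-1}M^*$) and multiplying by $k$ closes the induction. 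This is the step I expect to carry the real content, since it is exactly where the $\max$-contraction departs from the plain contraction of the previous theorem; once these two geometric estimates are in place, the rest is parallel.

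With $\rho_n \le k^n M$ and $\rho^*_n \le k^n M^*$ in hand I would reproduce the Cauchy argument verbatim. After noting that the iterates may be taken pairwise distinct — otherwise periodicity together with $\rho_n \to 0$ would force some $\rho_n=0$, contradicting $\rho_n>0$ — I split $\rho_r(x_n,x_{n+p})$ according to the parity of $p$, telescope through the rectangular inequality, and dominate the resulting sums by tails of the convergent series $\sum k^j$, so $\rho_r(x_n,x_{n+p}) \to 0$ and $\{x_n\}$ is $\rho_r$-Cauchy. By $\rho_r$-completeness it converges to some $u$, and, exactly as in the previous theorem, the completeness relation combined with $\rho'_n \to 0$ forces $\rho_r(u,u)=0$ and $\lim_n \rho_r(x_n,u)=0$.

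Finally I would verify $Tu=u$ and uniqueness. From $\rho_r(u,Tu) \le \rho_r(u,x_n)+\rho_r(x_n,x_{n+1})+\rho_r(x_{n+1},Tu)$ with $\rho_r(x_{n+1},Tu)=\rho_r(Tx_n,Tu) \le k\max\{\rho_r(x_n,u),\rho'_n,\rho_r(u,u)\}$, every term on the right tends to $0$ as $n\to\infty$ (here $\rho_r(u,u)=0$ is essential), whence $\rho_r(u,Tu)=0$ and $Tu=u$; the only care needed is to take $n$ large enough that $x_n,x_{n+1}$ are distinct from $u$ and $Tu$, which is legitimate since the iterates are pairwise distinct. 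For uniqueness, if $Tu=u$ and $Tv=v$, then (4) with $x=y=v$ gives $\rho_r(v,v)=0$, so $\max\{\rho_r(u,v),\rho_r(u,u),\rho_r(v,v)\}=\rho_r(u,v)$ and (4) yields $\rho_r(u,v)\le k\rho_r(u,v)$, forcing $\rho_r(u,v)=0$ and $u=v$.
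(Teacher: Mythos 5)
Your proposal is correct, and while its skeleton (Picard iteration, decay of $\rho_n$, $\rho^*_n$, $\rho'_n$, parity-split telescoping, completeness, then uniqueness via zero self-distance) coincides with the paper's, you handle the crucial step — processing the $\max$ in (4) — in a genuinely different and sharper way. The paper splits into cases according to whether $\max\{\rho_r(x_{n-1},x_n),\rho'_{n-1}\}$ is the self-distance or the consecutive distance, and in the second case invokes a vague backtracking argument ("using (4) repeatedly until we reach $i$"); the net output is only the qualitative statement $\rho_n\to 0$, $\rho^*_n\to 0$, with no rate. You instead absorb the self-distance terms, which decay like $k^n\rho'_0$, into a one-line induction yielding the uniform geometric bounds $\rho_n\le k^n M$ and $\rho^*_n\le k^n M^*$ with $M=\max\{\rho_0,\rho'_0\}$, $M^*=\max\{\rho^*_0,\rho'_0\}$. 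This difference is not cosmetic: in the Cauchy step the telescoped sum $\rho_n+\rho_{n+1}+\cdots+\rho_{n+2m}$ has an unbounded number of terms, so the paper's justification "each term tends to $0$ by (6)" does not actually give convergence of the sum as $n,m\to\infty$; your geometric bounds dominate the sum by the tail $k^nM/(1-k)$ (plus $k^{n+2m-2}M^*$ in the even case) and close this gap. Two further refinements on your side are also improvements over the printed proof: in verifying $Tu=u$ you correctly keep the full bound $\rho_r(Tx_n,Tu)\le k\max\{\rho_r(x_n,u),\rho'_n,\rho_r(u,u)\}$ (the paper silently reuses the plain-contraction estimate $k\rho_r(x_n,u)$ from the previous theorem), and you note that the rectangular inequality may only be applied once $x_n,x_{n+1}$ avoid $\{u,Tu\}$, which the pairwise distinctness of the iterates guarantees for large $n$.
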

\begin{proof}
Let $x_{0}\in X$ and define the sequence $\{x_{n}\}$ by
$$x_{1}=Tx_{0}, x_{2}=Tx_{1}=T^{2}x_{0},\cdots, x_{n}=Tx_{n-1}=T^{n}x_{0},\cdots$$
Note that, if there exists a natural number $n$ such that $\rho_{r}(x_{n},x_{n+1})=0,$
then $x_{n}=x_{n+1}$ which implies that $x_{n}$ is a fixed point of $T$ and we are done.
Also, if $x_{n}=x_{n+1}$ for some $n,$ then $x_{n}$ is the fixed point of $T$ and we also done.
So, we may assume that $\rho_{r}(x_{n},x_{n+1})>0,$ and $x_{n}\neq x_{n+1}$ for all $n.$

First, consider the following notations: $$\rho_{n}=\rho_{r}(x_{n},x_{n+1}), \ \rho^{*}_{n}=\rho_{r}(x_{n},x_{n+2}) \ \text{ and} \ \rho^{'}_{n}=\rho_{r}(x_{n},x_{n}).$$

Hence,
\begin{align*}
\rho^{'}_{n}=\rho_{r}(x_{n},x_{n})=\rho_{r}(Tx_{n-1},Tx_{n-1})&\le k \max\{ \rho_{r}(x_{n-1},x_{n-1}),\rho_{r}(x_{n-1},x_{n-1}),\rho_{r}(x_{n-1},x_{n-1})\}\\
&=k \rho^{'}_{n-1}\\ & \le \cdots \\ &\le k^{n} \rho^{'}_{0} .
\end{align*}
Thus, $\{\rho_{r}(x_{n},x_{n})\}$ is a decreasing sequence, and
$$\rho^{'}_{n}\rightarrow 0 \ \text{as} \ n\rightarrow \infty. \ \ \ (5)$$

By $(4)$ we have
$$\rho_{n}=\rho_{r}(x_{n},x_{n+1})=\rho_{r}(Tx_{n-1},Tx_{n})\le k \max\{ \rho_{r}(x_{n-1},x_{n}),\rho_{r}(x_{n},x_{n}),\rho_{r}(x_{n-1},x_{n-1})\}$$
Given the fact that $\{\rho_{r}(x_{n},x_{n})\}$ is a decreasing sequence we have two cases in the above inequality.\\
\textbf{Case 1:} $\max\{ \rho_{r}(x_{n-1},x_{n}),\rho_{r}(x_{n-1},x_{n-1})\}=\rho_{r}(x_{n-1},x_{n-1}),$ in this case
and by $(5)$ we deduce that $$\rho_{r}(x_{n},x_{n+1})\rightarrow 0 \ \text{as} \ n\rightarrow \infty.$$
\textbf{Case 2:} $\max\{ \rho_{r}(x_{n-1},x_{n})\rho_{r}(x_{n-1},x_{n-1})\}=\rho_{r}(x_{n-1},x_{n}),$
note that if there exists $i<n$ such that $\max\{ \rho_{r}(x_{i},x_{n}),\rho_{r}(x_{i},x_{i})\}=\rho_{r}(x_{i},x_{i}).$ Then
by using $(4)$ repeatedly until we reach $i$ we get case 1 and in this case one can easily deduce that
$\rho_{r}(x_{n},x_{n+1})\rightarrow 0 \ \text{as} \ n\rightarrow \infty.$ So we may assume that
$\max\{ \rho_{r}(x_{i},x_{n}),\rho_{r}(x_{i},x_{i})\}=\rho_{r}(x_{i},x_{n}),$ for all $i<n.$ Therefore,
 \begin{align*}
\rho_{r}(x_{n},x_{n+1})=\rho_{r}(Tx_{n-1},Tx_{n})&\le k \rho_{r}(x_{n-1},x_{n})\\
&\le k^{2}\rho_{r}(x_{n-2},x_{n-1})\\ &\le \cdots\\ &\le k^{n}\rho_{r}(x_{0},x_{1}).
\end{align*}
Hence, since $0<k<1$ we deduce that $\rho_{r}(x_{n},x_{n+1})\rightarrow 0 \ \text{as} \ n\rightarrow \infty.$

Thus, in both cases we have  $$\rho_{n}\rightarrow 0 \ \text{as} \ n\rightarrow \infty. \ \ (6)$$
Next, note that by $(4)$ we have:
$$\rho^{*}_{n}=\rho_{r}(x_{n},x_{n+2})=\rho_{r}(Tx_{n-1},Tx_{n+1})\le k \max\{ \rho_{r}(x_{n-1},x_{n+1}),\rho_{r}(x_{n+1},x_{n+1}),\rho_{r}(x_{n-1},x_{n-1})\}$$
Given the fact that $\{\rho_{r}(x_{n},x_{n})\}$ is a decreasing sequence we have two cases in the above inequality.\\
\textbf{Case 1:} $\max\{ \rho_{r}(x_{n-1},x_{n+1}),\rho_{r}(x_{n-1},x_{n-1})\}=\rho_{r}(x_{n-1},x_{n-1}),$ in this case
and by $(5)$ we deduce that $$\rho_{r}(x_{n},x_{n+2})\rightarrow 0 \ \text{as} \ n\rightarrow \infty.$$
\textbf{Case 2:} $\max\{ \rho_{r}(x_{n-1},x_{n+1})\rho_{r}(x_{n-1},x_{n-1})\}=\rho_{r}(x_{n-1},x_{n+1}),$
note that if there exists $i<n$ such that $\max\{ \rho_{r}(x_{i},x_{n}),\rho_{r}(x_{i},x_{i})\}=\rho_{r}(x_{i},x_{i}).$ Then
by using $(4)$ repeatedly until we reach $i$ we get case 1 and in this case one can easily deduce that
$\rho_{r}(x_{n},x_{n+2})\rightarrow 0 \ \text{as} \ n\rightarrow \infty.$ So we may assume that
$\max\{ \rho_{r}(x_{i},x_{n}),\rho_{r}(x_{i},x_{i})\}=\rho_{r}(x_{i},x_{n}),$ for all $i<n.$ Therefore,
 \begin{align*}
\rho_{r}(x_{n-1},x_{n+1})=\rho_{r}(Tx_{n-2},Tx_{n})&\le k \rho_{r}(x_{n-2},x_{n})\\
&\le k^{2}\rho_{r}(x_{n-3},x_{n-1})\\ &\le \cdots\\ &\le k^{n-2}\rho_{r}(x_{0},x_{2}).
\end{align*}
Hence, since $0<k<1$ we deduce that $\rho_{r}(x_{n},x_{n+2})\rightarrow 0 \ \text{as} \ n\rightarrow \infty.$

Thus, in both cases we have  $$\rho^{*}_{n}\rightarrow 0 \ \text{as} \ n\rightarrow \infty. \ \ (7)$$

Now, if for some $n> 0$ we have $x_{0}=x_{n},$ then
\begin{align*}
\rho_{0}&=\rho_{r}(x_{0},Tx_{0})\\ &=\rho_{r}(x_{n},Tx_{n})\\ &=\rho_{n},
\end{align*}
but, by $(6)$ we have $\rho_{n}\rightarrow 0 \ \text{as} \ n\rightarrow \infty.$ Thus, in this case we have $\rho_{0}=0$ and that is $x_{0}=x_{1},$ therefore $x_{0}$ is a fixed point of $T.$ Thereby, we may assume now that $x_{n}\neq x_{m}$ for all natural numbers $n\neq m.$
Similarly to the argument in the previous theorem, we claim that $\rho_{r}(x_{n},x_{n+p})\rightarrow 0$ as $n,p\rightarrow \infty.$
To prove the claim we need to consider the following two cases:

\textbf{Case 1:} $p=2m+1$ (i.e: $p$ is odd). Hence,
\begin{align*}
\rho_{r}(x_{n},x_{n+2m+1})&\le \rho_{r}(x_{n},x_{n+1})+\rho_{r}(x_{n+1},x_{n+2})+\rho_{r}(x_{n+2},x_{n+2m+1})\\&\le
\cdots \\ & \le \rho_{r}(x_{n},x_{n+1})+\rho_{r}(x_{n+1},x_{n+2})+\cdots +\rho_{r}(x_{n+2m},x_{n+2m+1}).
\end{align*}
Taking the limit in above inequality and by $(6)$ we obtain:
$$\rho_{r}(x_{n},x_{n+2m+1})\rightarrow 0 \ \text{as} \ n,m\rightarrow \infty$$

\textbf{Case 2:} $p=2m$ (i.e: $p$ is even). Thus,
\begin{align*}
\rho_{r}(x_{n},x_{n+2m})&\le \rho_{r}(x_{n},x_{n+1})+\rho_{r}(x_{n+1},x_{n+2})+\rho_{r}(x_{n+2},x_{n+2m})\\&\le
\cdots \\ & \le \rho_{r}(x_{n},x_{n+1})+\rho_{r}(x_{n+1},x_{n+2})+\cdots +\rho_{r}(x_{n+2m-3},x_{n+2m-2})+\rho_{r}(x_{n+2m-2},x_{n+2m})
\end{align*}
Since $0<k<1,$ taking the limit in above inequality by $(6)$ and $(7)$ we obtain:
$$\rho_{r}(x_{n},x_{n+2m})\rightarrow 0 \ \text{as} \ n,m\rightarrow \infty$$

Thus, $\{x_{n}\}$ is a $\rho_{r}-$Cauchy sequence. Since $(X,\rho_{r})$ is a $\rho_{r}-$complete rectangular metric like space,
we deduce that $\{x_{n}\}$ converges to some $u\in X,$ such that
$$\lim_{n\rightarrow \infty}\rho_{r}(x_{n},x_{n})=\rho_{r}(u,u)$$
On the other hand, we have $$0= \lim_{n\rightarrow \infty}\rho^{'}_{n}=\lim_{n\rightarrow \infty}\rho_{r}(x_{n},x_{n})=\rho_{r}(u,u)$$
Hence, $$\rho_{r}(u,u)=0$$
Now, \begin{align*}
 \rho_{r}(u,Tu)&\le\rho_{r}(u,x_{n})+\rho_{r}(x_{n},x_{n+1})+\rho_{r}(x_{n+1},Tu)\\ &\le
 \rho_{r}(u,x_{n})+\rho_{r}(x_{n},x_{n+1})+k\rho_{r}(x_{n},u).
 \end{align*}
Taking the limit as $n\rightarrow \infty$ we deduce that $\rho_{r}(u,Tu)=0$ and that is $Tu=u.$ Therefore, $T$ has a fixed point in $X.$

To show the uniqueness of the fixed point, first we want to bring the attention to the following fact,
If $v$ is a fixed point,
$$\rho_{r}(v,v)=\rho_{r}(Tv,Tv)\le k \rho_{r}(v,v)\le \cdots \le k^{n}\rho_{r}(v,v)< \rho_{r}(u,v).$$
Which implies that $\rho_{r}(v,v)=0.$
Now, assume that $T$ has two fixed points
say $u$ and $v.$ Since $\rho_{r}(u,u)=\rho_{r}(v,v)=0$ we can conclude that $\max\{\rho_{r}(u,v),\rho_{r}(u,u),\rho_{r}(v,v)\}=\rho_{r}(u,v).$ Thus,

$$\rho_{r}(u,v)=\rho_{r}(Tu,Tv)\le k \rho_{r}(u,v)< \rho_{r}(u,v).$$ Thus,
$$\rho_{r}(u,v)=0,$$ which implies that $u=v$ as desired.
\end{proof}

\begin{thm}
Let $(X,\rho_{r})$ be a $\rho_{r}-$complete rectangular metric like space, and $T$ a self mapping on $X.$
If there exists $0<k<1$ such that
 $$\rho_{r}(Tx,Ty)\le k \max\{\rho_{r}(x,y),\rho_{r}(x,Tx),\rho_{r}(y,Ty)\} \ \text{ for all } \ x,y \in X, \ \ \ (8)$$
then $T$ has a unique fixed point $u$ in $X,$ where $\rho_{r}(u,u)=0.$
\end{thm}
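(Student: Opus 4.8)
The plan is to run the Picard iteration template of the two preceding theorems, but the contraction $(8)$ is of \'Ciri\'c type (it involves the forward distances $\rho_{r}(x,Tx),\rho_{r}(y,Ty)$ rather than the self distances), so three preliminary estimates must be set up and, crucially, the closing fixed-point step will need an absorption argument. First I would fix $x_{0}\in X$, set $x_{n}=T^{n}x_{0}$, and dispose of the degenerate cases ($x_{n}=x_{n+1}$ for some $n$, or $x_{0}=x_{n}$ for some $n>0$) exactly as before, so that I may assume all iterates are distinct and all distances positive. With $\rho_{n}=\rho_{r}(x_{n},x_{n+1})$, $\rho^{*}_{n}=\rho_{r}(x_{n},x_{n+2})$ and $\rho^{'}_{n}=\rho_{r}(x_{n},x_{n})$, applying $(8)$ with $x=x_{n-1},\,y=x_{n}$ gives $\rho_{n}\le k\max\{\rho_{n-1},\rho_{n}\}$; since the maximum cannot equal $\rho_{n}$ (that would force $\rho_{n}=0$), it equals $\rho_{n-1}$, whence $\rho_{n}\le k\rho_{n-1}\le k^{n}\rho_{0}$. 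This is the point where $(8)$ is actually cleaner than $(4)$: the consecutive distances decay geometrically outright.

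Next I would derive matching geometric bounds for the auxiliary sequences. Taking $x=y=x_{n-1}$ in $(8)$ yields $\rho^{'}_{n}\le k\max\{\rho^{'}_{n-1},\rho_{n-1}\}$, and taking $x=x_{n-1},\,y=x_{n+1}$ yields $\rho^{*}_{n}\le k\max\{\rho^{*}_{n-1},\rho_{n-1},\rho_{n+1}\}$. Feeding in $\rho_{j}\le k^{j}\rho_{0}$ and inducting on $n$, I would obtain $\rho^{'}_{n}\le \max\{\rho^{'}_{0},\rho_{0}\}\,k^{n}$ and $\rho^{*}_{n}\le \max\{\rho^{*}_{0},\rho_{0}\}\,k^{n}$, so that all three of $\rho_{n},\rho^{'}_{n},\rho^{*}_{n}$ tend to $0$ geometrically.

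With this geometric control the Cauchy property follows by the same odd/even split used in the earlier proofs: for $p=2m+1$ the rectangular (quadrilateral) inequality telescopes into a sum dominated by $k^{n}\rho_{0}(1+k+k^{2}+\cdots)$, and for $p=2m$ the telescoping terminates in a single $\rho^{*}$-term, again geometrically small; hence $\{x_{n}\}$ is $\rho_{r}$-Cauchy, and by completeness it converges to some $u\in X$ with $\rho_{r}(u,u)=\lim_{n\to\infty}\rho^{'}_{n}=0$. As is usual in Branciari-type spaces, the quadrilateral inequality requires the intermediate iterates to be distinct (and distinct from $u,Tu$); the distinctness arranged in the first step, together with discarding finitely many terms, covers this technical point.

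The last step, $Tu=u$, is where I expect the only genuine obstacle. I would start from $\rho_{r}(u,Tu)\le \rho_{r}(u,x_{n})+\rho_{n}+\rho_{r}(x_{n+1},Tu)$ and bound the final term by $\rho_{r}(Tx_{n},Tu)\le k\max\{\rho_{r}(x_{n},u),\rho_{n},\rho_{r}(u,Tu)\}$. Unlike in the previous two theorems, the quantity $\rho_{r}(u,Tu)$ reappears inside the maximum, so it cannot simply be driven to zero; the resolution is absorption. Since $\rho_{r}(x_{n},u)\to\rho_{r}(u,u)=0$ and $\rho_{n}\to0$, for large $n$ the maximum equals $\rho_{r}(u,Tu)$, and passing to the limit gives $\rho_{r}(u,Tu)\le k\,\rho_{r}(u,Tu)$, forcing $\rho_{r}(u,Tu)=0$, i.e.\ $Tu=u$. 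Uniqueness is then immediate: any fixed point $v$ obeys $\rho_{r}(v,v)=\rho_{r}(Tv,Tv)\le k\,\rho_{r}(v,v)$, hence $\rho_{r}(v,v)=0$; and for two fixed points $u,v$ the maximum in $(8)$ collapses to $\rho_{r}(u,v)$, yielding $\rho_{r}(u,v)\le k\,\rho_{r}(u,v)$ and therefore $u=v$.
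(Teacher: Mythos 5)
Your proposal is correct and follows the paper's overall template (Picard iteration, geometric decay of $\rho_{n}$, $\rho^{'}_{n}$, $\rho^{*}_{n}$, the odd/even quadrilateral splitting to get the Cauchy property, then completeness and uniqueness), but it departs from the paper's own proof at two points, both to your advantage. First, for the auxiliary sequences the paper handles $\rho^{'}_{n}$ and $\rho^{*}_{n}$ by a case analysis on which term realizes the maximum, largely imported by reference from the preceding theorem (``it is not difficult to conclude''), whereas you close the recursions $\rho^{'}_{n}\le k\max\{\rho^{'}_{n-1},\rho_{n-1}\}$ and $\rho^{*}_{n}\le k\max\{\rho^{*}_{n-1},\rho_{n-1},\rho_{n+1}\}$ by induction into explicit bounds of the form $\max\{\rho^{'}_{0},\rho_{0}\}k^{n}$ and $\max\{\rho^{*}_{0},\rho_{0}\}k^{n}$; this is cleaner and self-contained. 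Second, and more importantly, at the step $Tu=u$ the paper simply writes $\rho_{r}(x_{n+1},Tu)\le k\rho_{r}(x_{n},u)$, which is \emph{not} what contraction $(8)$ yields: $(8)$ gives $\rho_{r}(Tx_{n},Tu)\le k\max\{\rho_{r}(x_{n},u),\rho_{n},\rho_{r}(u,Tu)\}$, and the term $\rho_{r}(u,Tu)$ cannot simply be dropped. That line appears to be carried over verbatim from the first theorem, where the contraction involves only $\rho_{r}(x,y)$ and the estimate is legitimate; under $(8)$ it is a genuine gap in the paper's argument. Your absorption step --- if $\rho_{r}(u,Tu)>0$ then for large $n$ the maximum equals $\rho_{r}(u,Tu)$, so the quadrilateral inequality gives $\rho_{r}(u,Tu)\le k\,\rho_{r}(u,Tu)$ in the limit, a contradiction --- is the standard \'{C}iri\'{c}-type fix and supplies the rigor the paper lacks at exactly the point you predicted would be the obstacle. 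Your uniqueness argument coincides with the paper's. (One caveat shared by both: concluding $\rho_{r}(u,u)=\lim_{n}\rho_{r}(x_{n},x_{n})=0$ tacitly assumes that the self-distance of the limit of a Cauchy sequence equals the Cauchy double limit, which is stronger than what Definition \ref{d9} literally guarantees; this weakness is inherited from the paper, not introduced by you.)
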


\begin{proof}
Let $x_{0}\in X$ and define the sequence $\{x_{n}\}$ by
$$x_{1}=Tx_{0}, x_{2}=Tx_{1}=T^{2}x_{0},\cdots, x_{n}=Tx_{n-1}=T^{n}x_{0},\cdots$$
Note that, if there exists a natural number $n$ such that $\rho_{r}(x_{n},x_{n+1})=0,$
then $x_{n}=x_{n+1}$ which implies that $x_{n}$ is a fixed point of $T$ and we are done.
Also, if $x_{n}=x_{n+1}$ for some $n,$ then $x_{n}$ is the fixed point of $T$ and we also done.
So, we may assume that $\rho_{r}(x_{n},x_{n+1})>0,$ and $x_{n}\neq x_{n+1}$ for all $n.$

First, consider the following notations: $$\rho_{n}=\rho_{r}(x_{n},x_{n+1}), \ \rho^{*}_{n}=\rho_{r}(x_{n},x_{n+2}) \ \text{ and} \ \rho^{'}_{n}=\rho_{r}(x_{n},x_{n}).$$
Hence,
for all natural number $n$ we have
$$\rho_{n}=\rho_{r}(x_{n},x_{n+1})=\rho_{r}(Tx_{n-1},Tx_{n})\le k\max \{ \rho_{r}(x_{n},x_{n+1}),\rho_{r}(x_{n-1},x_{n})\}. \ \ (9)$$
Hence, if $\max \{ \rho_{r}(x_{n},x_{n+1}),\rho_{r}(x_{n-1},x_{n})\}=\rho_{r}(x_{n},x_{n+1}),$ then inequality $(8)$ implies
$$\rho_{r}(x_{n},x_{n+1})< \rho_{r}(x_{n},x_{n+1})$$ which leads to a contradiction. Therefore,
$$\max \{ \rho_{r}(x_{n},x_{n+1}),\rho_{r}(x_{n-1},x_{n})\}=\rho_{r}(x_{n-1},x_{n}) \ \text{for all} \ n. \ \ (10)$$
Also, note that
\begin{align*}
\rho_{r}(x_{n},x_{n+1})=\rho_{r}(Tx_{n-1},Tx_{n})&\le k \rho_{r}(x_{n-1},x_{n})\\
&\le k^{2}\rho_{r}(x_{n-2},x_{n-1})\\ &\le \cdots\\ &\le k^{n}\rho_{r}(x_{0},x_{1}).
\end{align*}
Thus, since $0<k<1$ we deduce that $$\rho_{n}\rightarrow 0 \ \text{as} \ n\rightarrow \infty. \ \ \ (11)$$
On the other hand, we have

$$\rho^{'}_{n}=\rho_{r}(x_{n},x_{n})=\rho_{r}(Tx_{n-1},Tx_{n-1})\le k \max\{ \rho_{r}(x_{n-1},x_{n-1}),\rho_{r}(x_{n-1},x_{n})\},$$
in both cases, it is not difficult to conclude that
$$\lim_{n\rightarrow \infty}\rho^{'}_{n}=0$$
Similarly to the argument in proof of the previous theorem, one can easily deduce that
$$\lim_{n\rightarrow \infty}\rho^{*}_{n}=0$$
Now, if for some $n> 0$ we have $x_{0}=x_{n},$ then
\begin{align*}
\rho_{0}&=\rho_{r}(x_{0},Tx_{0})\\ &=\rho_{r}(x_{n},Tx_{n})\\ &=\rho_{n},
\end{align*}
but, by $(6)$ we have $\rho_{n}\rightarrow 0 \ \text{as} \ n\rightarrow \infty.$ Thus, in this case we have $\rho_{0}=0$ and that is $x_{0}=x_{1},$ therefore $x_{0}$ is a fixed point of $T.$ Thereby, we may assume now that $x_{n}\neq x_{m}$ for all natural numbers $n\neq m.$

Similarly to the argument in the previous theorem, we claim that $\rho_{r}(x_{n},x_{n+p})\rightarrow 0$ as $n,p\rightarrow \infty.$
To prove the claim we need to consider the following two cases:

\textbf{Case 1:} $p=2m+1$ (i.e: $p$ is odd). Hence,
\begin{align*}
\rho_{r}(x_{n},x_{n+2m+1})&\le \rho_{r}(x_{n},x_{n+1})+\rho_{r}(x_{n+1},x_{n+2})+\rho_{r}(x_{n+2},x_{n+2m+1})\\&\le
\cdots \\ & \le \rho_{r}(x_{n},x_{n+1})+\rho_{r}(x_{n+1},x_{n+2})+\cdots +\rho_{r}(x_{n+2m},x_{n+2m+1}).
\end{align*}
Taking the limit in above inequality and by $(6)$ we obtain:
$$\rho_{r}(x_{n},x_{n+2m+1})\rightarrow 0 \ \text{as} \ n,m\rightarrow \infty$$

\textbf{Case 2:} $p=2m$ (i.e: $p$ is even). Thus,
\begin{align*}
\rho_{r}(x_{n},x_{n+2m})&\le \rho_{r}(x_{n},x_{n+1})+\rho_{r}(x_{n+1},x_{n+2})+\rho_{r}(x_{n+2},x_{n+2m})\\&\le
\cdots \\ & \le \rho_{r}(x_{n},x_{n+1})+\rho_{r}(x_{n+1},x_{n+2})+\cdots +\rho_{r}(x_{n+2m-3},x_{n+2m-2})+\rho_{r}(x_{n+2m-2},x_{n+2m})
\end{align*}
Since $0<k<1,$ taking the limit in above inequality by $(6)$ and $(7)$ we obtain:
$$\rho_{r}(x_{n},x_{n+2m})\rightarrow 0 \ \text{as} \ n,m\rightarrow \infty$$

Thus, $\{x_{n}\}$ is a $\rho_{r}-$Cauchy sequence. Since $(X,\rho_{r})$ is a $\rho_{r}-$complete rectangular metric like space,
we deduce that $\{x_{n}\}$ converges to some $u\in X,$ such that
$$\lim_{n\rightarrow \infty}\rho_{r}(x_{n},x_{n})=\rho_{r}(u,u)$$
On the other hand, we have $$0= \lim_{n\rightarrow \infty}\rho^{'}_{n}=\lim_{n\rightarrow \infty}\rho_{r}(x_{n},x_{n})=\rho_{r}(u,u)$$
Hence, $$\rho_{r}(u,u)=0$$
Now, \begin{align*}
 \rho_{r}(u,Tu)&\le\rho_{r}(u,x_{n})+\rho_{r}(x_{n},x_{n+1})+\rho_{r}(x_{n+1},Tu)\\ &\le
 \rho_{r}(u,x_{n})+\rho_{r}(x_{n},x_{n+1})+k\rho_{r}(x_{n},u).
 \end{align*}
Taking the limit as $n\rightarrow \infty$ we deduce that $\rho_{r}(u,Tu)=0$ and that is $Tu=u.$ Therefore, $T$ has a fixed point in $X.$

To show the uniqueness of the fixed point, first we want to bring the attention to the following fact,
If $v$ is a fixed point, then
$$\rho_{r}(v,v)=\rho_{r}(Tv,Tv)\le k \rho_{r}(v,v)\le \cdots \le k^{n}\rho_{r}(v,v)< \rho_{r}(u,v).$$
Which implies that $\rho_{r}(v,v)=0.$
Now, assume that $T$ has two fixed points
say $u$ and $v.$ Since $\rho_{r}(u,u)=\rho_{r}(v,v)=0$ we can conclude that $\max\{\rho_{r}(u,v),\rho_{r}(u,u),\rho_{r}(v,v)\}=\rho_{r}(u,v).$ Thus,

$$\rho_{r}(u,v)=\rho_{r}(Tu,Tv)\le k \rho_{r}(u,v)< \rho_{r}(u,v).$$ Thus,
$$\rho_{r}(u,v)=0,$$ which implies that $u=v$ as desired.
\end{proof}

\section{Acknowledgements}
The  first three authors would like to thank Prince Sultan University for funding this work through research group
Nonlinear Analysis Methods in Applied Mathematics (NAMAM) group number RG-DES-2017-01-17.

\section{Competing interests}
The authors declare that they have no competing interests.

\section{Authors' contributions}
All the authors participated in obtaining the main results of this manuscript and drafted the manuscript. All authors
read and approved the final manuscript.

\end{document}